    \newtheorem{thm}{Theorem}                     [section]
    \newtheorem{thm*}{Theorem}
    \newtheorem{lemma}[thm]{Lemma}
    \newtheorem{lemma*}{Lemma}    %for AMS \newtheorem*
\newcommand*{\set}[1]{\left\{#1\right\}}           % set of ...
\newcommand*{\brs}[1]{\left(#1\right)}             % brackets
\newcommand{\SqBrs}[1]{\Big[#1\Big]}        % brackets
\newcommand{\la}{\langle}
\newcommand{\hilb}{\mathcal H}
\newcommand{\Scal}[1]{\left\langle #1\right\rangle}               % scalar product
\newcommand{\clK}{\mathcal K}
\newcommand{\clV}{\mathcal V}
\newcommand{\mbR}{\mathbb R}
\newcommand{\ran}{\operatorname{ran}}
\newcommand{\eps}{\varepsilon}
\begin{document}
\title[LAP and singular spectrum]{Limiting absorption principle \\ and singular spectrum}
% Former title: 
%\title{On analytic extension of resonance points}
\author{Nurulla Azamov}
\address{Independent scholar, Adelaide, SA, Australia}
\email{azamovnurulla@gmail.com}
 \keywords{Lippmann-Schwinger equation, singular spectrum}
 \subjclass[2000]{ %Mathematics Subject Classification (2000).
     Primary 47A40;
%     Secondary 47A70, 81U99.
     %Primary 47A55; % Perturbation theory
     %Secondary 47A11% Local spectral properties
 }
\begin{abstract} 
In this paper I give an explicit construction of an analogue of eigenspace for points of singular spectrum of a self-adjoint operator.
This construction is based on an abstract version of homogeneous Lippmann-Schwinger equation.
\end{abstract}
\maketitle
%\begin{center} { \sf \today} 
%\end{center}

\bigskip 
%\section{Introduction}

\section{Introduction}

\smallskip
Let $\hilb$ and $\clK$ be (complex separable) Hilbert spaces,
$H_0$ a self-adjoint operator on $\hilb$ and $F \colon \hilb \to \clK$ a bounded operator with zero kernel and co-kernel such that 
the sandwiched resolvent 
$$
     T_z(H_0)  = F (H_0-z)^{-1}F^*
$$
is compact. One says that the limiting absorption principle (LAP) holds at a real number $\lambda$ if the norm limit 
\begin{equation} \label{F: T+(0) exists}
     T_{\lambda + i0 }(H_0) :=  \lim_{y \to 0^+}  T_{\lambda + iy }(H_0)  
\end{equation}
exists. Usually LAP means that such limit exists for a.e. $\lambda$ in some open interval, but for the present purpose it suffices to consider it at a single point.
It may well happen that the limit \eqref{F: T+(0) exists} does not exist. In this case there are two scenarios: it is possible that 
the limit
\begin{equation*} %\label{F: T+(r) exists}
     T_{\lambda + i0}(H_r), 
\end{equation*}
where 
\begin{equation} \label{F: line}
   H_r = H_0 + rF^*F,
\end{equation}
exists for at least one real number $r \in \mbR,$ or otherwise. In the first scenario the limit exists for all real $r$ except some discrete set of values, called coupling resonance points. 
In the first of these scenarios  $\lambda$ is called a semi-regular point of the pair $H_0,F,$ and in the second $\lambda$ is called essentially singular. 
For a semi-regular point $\lambda$ the kernel, denoted 
\begin{equation} \label{F: Upsilon}
    \Upsilon^1_{\lambda+i0}(H_0),
\end{equation}
of the operator $1 - r T_{\lambda+i0}(H_r) $
is well-defined for non-resonance values of $r$ and in that case it does not depend on the choice of such~$r.$
The aim of this paper is to prove the following theorem. At the end of this introduction I make some remarks explaining why such a theorem is interesting. 

\begin{thm} \label{T1}
Assume the above about $H_0$ and $F.$ Suppose there exists a function~$g$ from 
$L^1(\mbR, (1+x^2)^{-1}\,dx)$ such that for a.e. $\lambda$ 
\begin{equation} \label{F: T1}
    \sup _{y \in (0,1]} \| T_{\lambda + iy} (H_0) \|  \leq g(\lambda).
\end{equation}
Then for a semi-regular point~$\lambda$ 
$$
  \bigcap_{\lambda \in O} \overline{F E_O(H_0)\hilb} = \Upsilon^1_{\lambda+i0}(H_0),
$$
where the intersection is over all open neighbourhoods of~$\lambda.$
\end{thm}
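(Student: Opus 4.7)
My plan is to prove the two inclusions separately. Let $V_1 := \bigcap_{O\ni\lambda} \overline{F E_O(H_0)\hilb}$ and $V_2 := \Upsilon^1_{\lambda+i0}(H_0) = \ker(1-rT_{\lambda+i0}(H_r))$, with $r$ a fixed non-resonance value. The algebraic backbone is the identity
$$F(H_r-z)^{-1} = (1-rT_z(H_r))\,F(H_0-z)^{-1}, \qquad z\in\mbC\setminus\mbR,$$
derived from the resolvent identity and $rF^*F = H_r-H_0$. Non-resonance of $r$ secures both norm convergence $T_{\lambda+iy}(H_r)\to T_{\lambda+i0}(H_r)$ as $y\to 0^+$ and, crucially, $\ker(H_r-\lambda)=\{0\}$: an eigenvector $\phi$ of $H_r$ at $\lambda$ would contribute a singular $(iy)^{-1}$-term to $T_{\lambda+iy}(H_r)$, incompatible with norm convergence of the latter.

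For $V_2\subseteq V_1$: given $u\in V_2$, put $\xi_y := r(H_r-\lambda-iy)^{-1}F^*u$, so that $F\xi_y = rT_{\lambda+iy}(H_r)u\to u$ as $y\to 0^+$. For any open $O\ni\lambda$, split $\xi_y = E_O(H_0)\xi_y + (1-E_O(H_0))\xi_y$; the first summand produces an element of $F E_O(H_0)\hilb$, and the identity above expresses
$$(1-E_O(H_0))\xi_y = r(1-E_O(H_0))(H_0-\lambda-iy)^{-1}F^*(1-rT_{\lambda+iy}(H_r))u.$$
Since $\|(1-E_O(H_0))(H_0-\lambda-iy)^{-1}\|\leq \operatorname{dist}(\lambda,\mbR\setminus O)^{-1}$ uniformly in small $y$, and the trailing factor vanishes because $u\in V_2$, we conclude $F E_O(H_0)\xi_y \to u$, so $u\in\overline{F E_O(H_0)\hilb}$ for every $O$.

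For $V_1\subseteq V_2$: I would argue dually. The characterisation
$$u\in V_1 \iff u\perp v\text{ whenever } F^*v\in E_{\mbR\setminus O}(H_0)\hilb\text{ for some open }O\ni\lambda$$
identifies $V_1 = D^\perp$, where $D$ is the collection of such $v$, while $V_2 = \ran(1-rT_{\lambda-i0}(H_r))^\perp$ by taking adjoints (using $T_z^* = T_{\bar z}$ and $r\in\mbR$). Thus the inclusion reduces to showing $\overline{\ran(1-rT_{\lambda-i0}(H_r))}\subseteq\overline{D}$. A computation with the identity above confirms that $F^*(1-rT_{\lambda-i0}(H_r))v$ carries no $H_0$-point-mass at $\lambda$ — because $(1-rT_{\lambda+i0}(H_r))F\phi=0$ for every eigenvector $\phi$ of $H_0$ at $\lambda$, so $F\phi\perp\ran(1-rT_{\lambda-i0}(H_r))$. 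One then approximates $(1-rT_{\lambda-i0}(H_r))v$ by elements of $D$ by cutting out shrinking neighbourhoods of $\lambda$ from the $H_0$-spectral support.

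\emph{Main obstacle.} The approximation in the last step is the delicate point: cutting the $\hilb$-side vector $F^*(1-rT_{\lambda-i0}(H_r))v$ is easy once the point-mass vanishes, but lifting this to a $\clK$-level approximation of $(1-rT_{\lambda-i0}(H_r))v$ itself requires quantitative control of the absolutely continuous $H_0$-spectral density of $F^*(1-rT_{\lambda-i0}(H_r))v$ near $\lambda$. This is where the integrability hypothesis \eqref{F: T1} enters decisively: with $g\in L^1(\mbR,(1+x^2)^{-1}\,dx)$, the operator-valued Herglotz function $z\mapsto T_z(H_0)$ admits a Nevanlinna representation with boundary density $\Sigma'(\mu) = \pi^{-1}\operatorname{Im}T_{\mu+i0}(H_0)$ pointwise bounded by $g(\mu)/\pi$, and this estimate is what propagates the orthogonality from off-$\lambda$ $F^*$-support to the limit. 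I expect this quantitative passage — rather than the algebraic steps — to be the technical heart of the proof.
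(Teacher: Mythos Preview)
Your argument for the inclusion $\Upsilon^1_{\lambda+i0}(H_0)\subset\bigcap_{O\ni\lambda}\overline{FE_O(H_0)\hilb}$ is correct, and it is considerably more direct than the paper's. The paper reaches the same intermediate conclusion $E_{\mbR\setminus(\lambda-\delta,\lambda+\delta)}(H_0)\xi_y\to 0$ by a detour: a resolvent identity for $R_w(H_0)R_z(H_1)$, an explicit formula for $\Im\langle \xi_y, R_{x+iy}(H_0)\xi_y\rangle$, integration in $x$ over $\mbR\setminus(\lambda-\delta,\lambda+\delta)$ controlled by dominated convergence via the hypothesis~\eqref{F: T1}, and finally Stone's formula. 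Your factorisation $R_z(H_r)F^*=R_z(H_0)F^*(1-rT_z(H_r))$ together with the elementary spectral bound $\|(1-E_O(H_0))(H_0-z)^{-1}\|\leq\mathrm{dist}(\lambda,\mbR\setminus O)^{-1}$ bypasses all of this and, notably, never appeals to~\eqref{F: T1} for this inclusion.

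You have, however, inverted the roles of the two inclusions relative to the paper. The paper does not prove the reverse inclusion $\bigcap_{O}\overline{FE_O(H_0)\hilb}\subset\Upsilon^1_{\lambda+i0}(H_0)$ here at all; it simply quotes \cite{AzSFIESVI} for it, and it is precisely the inclusion you dispatched cleanly that consumes the hypothesis~\eqref{F: T1} in the paper's treatment. Your expectation that \eqref{F: T1} ``enters decisively'' in the reverse direction is therefore misplaced. Your duality sketch for that reverse inclusion remains genuinely incomplete: the passage from cutting the $\hilb$-vector $F^*(1-rT_{\lambda-i0}(H_r))v$ away from $\lambda$ to approximating $(1-rT_{\lambda-i0}(H_r))v$ itself in $\clK$ by elements of your set $D$ is not justified, and this is exactly the kind of lift that can fail without additional input. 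If you take the reverse inclusion from \cite{AzSFIESVI} as the paper does, your proof is complete and in fact simpler than the paper's; otherwise that lifting step must still be supplied.
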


Theorem~\ref{T1} immediately implies the following theorem which gives a partial positive solution to \cite[Subsection 15.9, Conjecture 7]{AzSFIES}.
\begin{thm} \label{T: Conjecture 7 from AzSFIES}
Assume the premise of Theorem \ref{T1}.
If $\chi \in \hilb$ obeys $F\chi \in \Upsilon^1_{\lambda + i0}(H_0)$ then $H_0\chi = \lambda \chi.$
\end{thm}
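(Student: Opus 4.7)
The plan is to deduce the statement from Theorem \ref{T1} by a weak-convergence and closed-graph argument. Theorem \ref{T1} gives $F\chi \in \overline{FE_O(H_0)\hilb}$ in $\clK$ for every open neighborhood $O$ of $\lambda$. I fix a decreasing sequence of bounded open intervals $O_n \downarrow \{\lambda\}$ and pick approximants $\xi_n \in E_{O_n}(H_0)\hilb$ with $F\xi_n \to F\chi$ in $\clK$; a technical point, addressed below, is that the selection must be arranged so that $\{\xi_n\}$ stays bounded in $\hilb$.

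Granted the boundedness, the remainder is short. The spectral support of $\xi_n$ lies in $O_n$, so $\|(H_0-\lambda)\xi_n\| \leq \operatorname{diam}(O_n)\,\|\xi_n\| \to 0$. Since $F$ has zero kernel, $F^*\clK$ is dense in $\hilb$, and for every $\eta \in \clK$,
\[
  \Scal{F^*\eta,\,\xi_n} = \Scal{\eta,\,F\xi_n} \longrightarrow \Scal{\eta,\,F\chi} = \Scal{F^*\eta,\,\chi},
\]
so boundedness of $\{\xi_n\}$ together with density forces $\xi_n \rightharpoonup \chi$ weakly in $\hilb$. The graph of the self-adjoint operator $H_0$ is closed and hence weakly closed, so combining $\xi_n \rightharpoonup \chi$ with $H_0\xi_n - \lambda\xi_n \to 0$ strongly gives $\chi \in \dom(H_0)$ and $H_0\chi = \lambda\chi$.

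The hard part is producing a bounded approximating sequence, because $F$ need not be bounded below on $E_{O_n}(H_0)\hilb$. Here the hypothesis of Theorem \ref{T1} should enter decisively: the $L^1$ bound together with Stone's formula identifies
\[
  FE_O(H_0)F^* = \frac{1}{\pi}\int_O \operatorname{Im} T_{x+i0}(H_0)\,dx,
\]
a norm-convergent integral of compact operators, so $F|_{E_O(H_0)\hilb}$ itself is compact. Exploiting this compactness one expects to be able to build $\xi_n$ with $\|\xi_n\|$ uniformly controlled. Alternatively one can bypass the bounded-weak-convergence step via the resolvent identity
\[
  F(H_0-\lambda)(H_0-w)^{-1}\xi_n = F\xi_n + (w-\lambda)F(H_0-w)^{-1}\xi_n, \qquad w \in \mbC\setminus\mbR,
\]
whose left-hand side has norm at most $\operatorname{diam}(O_n)\,\|\xi_n\|/|\operatorname{Im} w|$; after controlling $\|\xi_n\|$ and passing to the limit, one gets $F(H_0-\lambda)(H_0-w)^{-1}\chi = 0$, whence injectivity of $F$ gives $(H_0-w)^{-1}\chi \in \ker(H_0-\lambda)$ and thus $H_0\chi = \lambda\chi$.
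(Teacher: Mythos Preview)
The paper does not supply a separate argument for this theorem; it simply records that ``Theorem~\ref{T1} immediately implies'' it, and then moves on to the consequence~\eqref{F5}.  So there is no detailed proof in the paper to compare against --- only the bare assertion that the implication is immediate.

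Your write-up, however, is not a complete proof.  The entire weak--convergence scheme rests on producing approximants $\xi_n\in E_{O_n}(H_0)\hilb$ with $F\xi_n\to F\chi$ \emph{and} $\sup_n\|\xi_n\|<\infty$, and you never establish the second condition.  Both of the fixes you sketch still presuppose it (``after controlling $\|\xi_n\|$'' is exactly the missing step), and the compactness observation in fact points the wrong way.  If $K:=FE_{O}$ is compact, then for every $C>0$ the image $K(\{\|\xi\|\le C\})$ of the closed ball is norm--compact in~$\clK$, hence closed; consequently a bounded sequence $\xi_n\in E_{O}\hilb$ with $F\xi_n\to F\chi$ can exist only if $F\chi$ already lies in $FE_{O}\hilb$, i.e.\ only if $\chi\in E_{O}\hilb$.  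Thus the existence of a bounded approximating sequence for every neighbourhood~$O$ is \emph{equivalent} to the conclusion $\chi\in\bigcap_O E_O\hilb=E_{\{\lambda\}}\hilb$ you are trying to reach, and compactness of $FE_O$ cannot be used to manufacture such a sequence.

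In short: Theorem~\ref{T1} alone gives only $F\chi\in\bigcap_O\overline{FE_O\hilb}$ in the $\clK$--topology, and passing from this to $\chi\in\bigcap_O E_O\hilb$ in~$\hilb$ is precisely the content of the theorem.  Your outline does not bridge that gap; it would need an additional ingredient (for instance, a uniform a~priori bound on $\|f_{\lambda+iy}\|$, or an argument that $\ran(F^*)\cap E_{\mbR\setminus O}\hilb$ is dense in $E_{\mbR\setminus O}\hilb$ for the relevant~$O$) that neither you nor the paper spells out.
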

\noindent 
Thus, given the condition \eqref{F: T1}, Theorems~\ref{T: Conjecture 7 from AzSFIES} and~\cite[Theorem 4.1.1] {AzSFIES} assert that 
\begin{equation} \label{F5}
   F\clV(\lambda, H_0) = \Upsilon^1_{\lambda+i0}(H_0) \cap \ran (F),
\end{equation}
where $\clV(\lambda, H_0)$ is the eigenspace of $H_0$ corresponding to en eigenvalue $\lambda.$
%Following \cite{AzSFIES}, we will call a vector $u \in \clK$ \emph{regular} if $u \in \ran(F).$

\smallskip
{\bf Remark 1.}
Assuming that the rigging $F$ is bounded, 
the vector space $\ran (F)$ endowed with the graph-norm, is a Hilbert space naturally isomorphic to~$\hilb,$ the isomorphism being given by~$F$ itself.
The equality \eqref{F5} asserts that the eigenvectors of $H_0$ corresponding to an eigenvalue $\lambda$ can be interpreted as those elements of 
$\Upsilon^1_{\lambda+i0}(H_0)$ which belong to the image of~$F.$
Vectors from $\Upsilon^1_{\lambda+i0}(H_0)$ which do not belong to $\ran(F)$ can therefore be interpreted as generalised eigenvectors of~$H_0.$
Moreover, these generalised eigenvectors are  $F$-images of elements of the singular subspace of~$H_0.$

\smallskip
{\bf Remark 2.} 
Instead of the straight line \eqref{F: line} we could have worked with the line 
$
   H_r = H_0 + rF^*JF,
$   
where $J$ is any bounded self-adjoint operator on $\clK$ such that for some $r \in \mbR$ the limit
\begin{equation} \label{F:T+(J)}
   T_{\lambda+i0} (H_0 + rF^*JF)
\end{equation}
exists (such operators $J$ are called regular directions), --- proof is exactly the same.
But as far as Theorem \ref{T1} is concerned, this makes no difference since the solution set \eqref{F: Upsilon} to the equation 
\begin{equation} \label{F: Lippmann-Schwinger}
   (1 - r T_{\lambda + i0} (H_0 + rF^*JF) J ) u = 0
\end{equation}
does not depend on a choice of a regular direction $J$ and a non-resonant value of $r,$ see \cite{AzSFIES}.
Another reason for considering the direction $\mathrm{Id}$ instead of an arbitrary $J$ is that if the limit \eqref{F:T+(J)} exists for some bounded $J$ then it also exists for the identity operator $J = \mathrm{Id},$
see \cite{AzSFIESIV}.

\smallskip
{\bf Remark 3.} 
The equation \eqref{F: Lippmann-Schwinger} is nothing else but the homogeneous version of an abstract Lippmann-Schwinger equation,
see e.g. \cite[\S 4.3]{BeShu} or \cite{TayST}.
 For a semi-regular energy $\lambda,$ the limit $T_{\lambda+i0}(H_0)$ fails to exist if and only if the 
equation \eqref{F: Lippmann-Schwinger} has a non-zero solution. The solutions can be interpreted as bound states or meta-stable states (also called resonances) of $H_0$ with energy~$\lambda,$ 
where bound states  correspond to elements of \eqref{F5}. 

\smallskip
{\bf Remark 4.}
Theorem \ref{T1} is not unrelated to the well-known Simon-Wolff criterion \cite{SW}, see also \cite{SimTrId2}. 
This relation will soon be discussed elsewhere.

\smallskip
{\bf Remark 5.}
It is not essential to assume that the rigging operator $F$ is bounded. I made this assumption to avoid more technical details. 

%orthogonal projection of these generalised eigenvectors to the absolutely continuous subspace of~$H_0$
%is zero.

%\section{Some lemmas}
%Throughout this paper we assume that $H_0$ is a self-adjoint operator on a rigged Hilbert space $(\hilb,F),$ as defined in the introduction.

%\begin{lemma}  For $y = \Im z > 0,$  
%$$  
%     \norm{   F R_z(H) } \leq  y^{-1/2} \sqrt{ \norm{ \Im T_z(H) } }.
%$$
%\end{lemma}
%\begin{proof}  We have 
%\begin{align*} 
%   \norm{   F R_z(H) }^ 2 &  =    \norm{   F R_z(H) R_{\bar z} (H) F^* } \\
%    & =  \frac 1{2y}  \norm{   F ( R_z(H) - R_{\bar z} (H) ) F^* }   = \frac 1 y \norm{ \Im T_z(H) }.
%\end{align*}
%\end{proof}
%
%\begin{lemma}   \label{L2}  (??)
%Let $\lambda \in \Lambda(H_0).$ Then there exist $y_0>0$ and $C>0$ such that for all $0 < y < y_0$ 
%$$
%   \norm{ T_{\lambda+iy}(H_0) - T_{\lambda+i0}(H_0) } \leq C \sqrt{y}.
%$$
%\end{lemma}
%\begin{proof}  Let $f(y) = T_{\lambda+iy}(H_0).$ The function $f$ is differentiable in $(0,1]$ and continuous in $[0,1].$ Hence 
%$$
%     f(y) - f(0) = \int _0^y f'(\zeta)\,d\zeta.
%$$
%We have 
%$$
%    f'(y) = F R ^2 _{ \lambda + iy } (H_0) F^* .
%$$
%So, 
%$$
%    \norm{ f(y) - f(0) } \leq  y  \norm{ F R _{ \lambda + iy } }   \cdot    \norm{ R _{ \lambda + iy } F^* }
%$$
%
%THERE IS A PROBLEM HERE
%\end{proof}

\section{Proof of Theorem \ref{T1}}

\begin{lemma} Let $H_1 = H_0 + V.$ For any $w \in \rho(H_0)$ and $z \in \rho(H_1)$ 
\begin{equation} \label{F: tricky equality}
  (w-z)R_{w}(H_0) R_{z}(H_1) = - R_{z}(H_1) + R_{w}(H_0) \SqBrs{1 -  V R_{z}(H_1)}.
\end{equation}
\end{lemma}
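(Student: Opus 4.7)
The plan is to derive the identity from the second resolvent identity, which is the natural tool whenever two resolvents $R_w(H_0)$ and $R_z(H_1)$ with different spectral parameters and different operators appear on the same side.

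First I would write $H_1 - z = (H_0 - w) + V + (w-z)$. Sandwiching between $R_w(H_0)$ on the left and $R_z(H_1)$ on the right gives
\begin{equation*}
  R_w(H_0) = R_z(H_1) + R_w(H_0) V R_z(H_1) + (w-z) R_w(H_0) R_z(H_1).
\end{equation*}
Isolating the $(w-z)$ term on the left and factoring $R_w(H_0)$ on the right,
\begin{equation*}
  (w-z) R_w(H_0) R_z(H_1) = -R_z(H_1) + R_w(H_0)\bigl[1 - V R_z(H_1)\bigr],
\end{equation*}
which is exactly \eqref{F: tricky equality}.

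There is no real obstacle here: the identity is a one-line consequence of the generalized second resolvent identity, and the only choice to make is the order in which to group terms so that the coefficient $(w-z)$ appears cleanly on the left. Everything is algebraic on the common domain, and the assumption $w \in \rho(H_0)$, $z \in \rho(H_1)$ is only used to guarantee that both resolvents are bounded everywhere and that the manipulations make sense as operator identities on all of $\hilb$.
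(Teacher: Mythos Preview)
Your argument is correct and is in fact more direct than the paper's. The paper first rewrites $R_w(H_0)$ as $(1 - R_w(H_1)V)^{-1}R_w(H_1)$ via the second resolvent identity, then applies the first resolvent identity $(w-z)R_w(H_1)R_z(H_1) = R_w(H_1) - R_z(H_1)$, and finally converts back using $(1 - R_w(H_1)V)^{-1} = 1 + R_w(H_0)V$. Your route bypasses all of this by sandwiching the single algebraic decomposition $H_1 - z = (H_0 - w) + V + (w - z)$ between $R_w(H_0)$ and $R_z(H_1)$; since $V$ is bounded here, $\dom H_1 = \dom H_0$ and the sandwich is legitimate on all of $\hilb$. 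A minor bonus of your approach is that it uses only the stated hypotheses $w \in \rho(H_0)$, $z \in \rho(H_1)$, whereas the paper's detour through $R_w(H_1)$ tacitly assumes $w \in \rho(H_1)$ as well.
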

\begin{proof}
%    Proof of this equality is a straightforward calculation based on the second and the first resolvent identities. 
    Using the second resolvent identity
$$
  R_{w}(H_0) = (1 - R_{w}(H_1)V)^{-1} R_{w}(H_1)
$$
we rewrite $R_{w}(H_0)$ in terms of $R_{w}(H_1)$ with the aim to use next the first resolvent identity:
\begin{equation*}
  \begin{split}
     (w-z)R_{w}(H_0) R_{z}(H_1) & = (w-z)(1 - R_{w}(H_1)V)^{-1} R_{w}(H_1) R_{z}(H_1) \\
         & = (1 - R_{w}(H_1)V)^{-1} \SqBrs{R_{w}(H_1) - R_{z}(H_1)}\\
         & = R_{w}(H_0) - (1 - R_{w}(H_1)V)^{-1}R_{z}(H_1).
  \end{split}
\end{equation*}
Since $(1 - R_w(H_1)V)^{-1} = 1 + R_w(H_0)V,$ this gives 
\begin{equation*}
  \begin{split}
      (w-z)R_{w}(H_0) R_{z}(H_1) &  = R_{w}(H_0) - (1 + R_w(H_0)V)R_{z}(H_1) \\
       & = - R_{z}(H_1)+ R_{w}(H_0)\SqBrs{1 - V R_{z}(H_1)}.
  \end{split}
\end{equation*}

\end{proof}
%\eqref{F: 2nd res identity} 

We only need to prove the inclusion
\begin{equation} \label{F: Upsilon subset}
      \Upsilon^1_{\lambda+i0}(H_0)  \subset \bigcap_{\lambda \in O} \overline{F E_O(H_0)\hilb},
\end{equation}
since the other inclusion was proved in \cite{AzSFIESVI}.
%An element $u$ of $\Upsilon_{\lambda+i0}^1(H_0)$
%is a solution to the equation (\ref{F: HLSE}). 
Let $u \in \Upsilon_{\lambda+i0}^1(H_0),$ that is, 
\begin{equation} \label{F: HLSE}
    (1 - s T_{\lambda+i0} (H_s)) u = 0.
\end{equation}
Since solutions of (\ref{F: HLSE}) do not depend on the choice of a non-resonant value of~$s,$ without loss of generality
we can assume that $s=1,$ in particular assuming that this value is non-resonant.  
% assuming~$\lambda$ is regular rel. $H_1,$ that is, $\lambda \in \Lambda(H_1,F).$ This shows relevance of the vector 
Let
$$
  f_{\lambda+iy} := R_{\lambda+iy}(H_1) F^* u.
$$
%for the proof, since its $F$-image is close to $u.$ 
Our aim is to show that for small enough $y>0$
the spectral representation of the vector $f_{\lambda+iy}$ with respect to $H_0$ is concentrated near~$\lambda.$ 

\medskip

%(C) If $\mu \in \Lambda(H_0,F),$ $\lambda \in \Lambda(H_1,F)$ and $\mu\neq\lambda,$ then 
%$F R_{\mu+iy}(H_0) R_{\lambda+iy}(H_1)F^*$
%converges in norm as $y,y \to 0^+.$ 
%
%This follows from sandwiching  (\ref{F: tricky equality}) by~$F$ and $F^*.$

\begin{lemma} For $\lambda, \ x \in \mbR$ and $y>0$ we have 
\begin{equation} \label{F: Im (fy,R(mu) fy)}
  \begin{split}
      \Im  \langle f_{\lambda+iy}, & \ R_{x+iy}(H_0) f_{\lambda+iy} \rangle  \\
           & = (x-\lambda)^{-1}
           \Im \brs{(x-\lambda-2iy)^{-1}\Scal{[...] u,  \SqBrs{u - T_{\lambda+iy}(H_1)u}}},
  \end{split}
\end{equation}
where 
$$
  [...] = -T_{\lambda+iy}(H_1) + T_{x-iy}(H_0) [ 1 -    T_{\lambda+iy}(H_1)].
$$

\end{lemma}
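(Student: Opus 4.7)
The plan is to apply the tricky resolvent identity \eqref{F: tricky equality} twice, each time sandwiching with $F$ and $F^*$ to convert bare resolvents into the operators $T_z$. Recall that, after the reduction to $s=1$, we have $H_1 = H_0 + F^*F$, so $V = F^*F$ in the identity.

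First I would rewrite $R_{x+iy}(H_0) f_{\lambda+iy} = R_{x+iy}(H_0) R_{\lambda+iy}(H_1) F^* u$ by applying \eqref{F: tricky equality} with $w = x+iy$, $z = \lambda+iy$, so that $w - z = x - \lambda$ produces the prefactor $(x-\lambda)^{-1}$. One of the two resulting summands is $-R_{\lambda+iy}(H_1) F^* u = -f_{\lambda+iy}$, which contributes $-\|f_{\lambda+iy}\|^2$ to the inner product with $f_{\lambda+iy}$; this is real and is killed by $\Im$. On the other summand the factor $[1 - V R_{\lambda+iy}(H_1)] F^* u$ collapses to $F^*[u - T_{\lambda+iy}(H_1) u]$, so taking $\Im\langle f_{\lambda+iy}, \cdot \rangle$ leaves the task of evaluating
$$
   (x-\lambda)^{-1}\,\Im\,\langle f_{\lambda+iy},\, R_{x+iy}(H_0) F^* [u - T_{\lambda+iy}(H_1) u]\rangle.
$$

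Next I would push the leftmost resolvent across the inner product, writing this as $\langle u, F R_{\lambda-iy}(H_1) R_{x+iy}(H_0) F^* v\rangle$ with $v := u - T_{\lambda+iy}(H_1)u$, and apply \eqref{F: tricky equality} a second time, in its adjoint form, to the double resolvent $R_{\lambda-iy}(H_1) R_{x+iy}(H_0)$. Equivalently, I would apply the lemma to $R_{x-iy}(H_0) R_{\lambda+iy}(H_1)$ with $w = x - iy$, $z = \lambda + iy$ (so $w - z = x - \lambda - 2iy$), sandwich with $F,F^*$, and take adjoints. After replacing $FR_{w}(H_0)F^*$ and $FR_{z}(H_1)F^*$ by $T_{x-iy}(H_0)$ and $T_{\lambda+iy}(H_1)$, the sandwiched bracket becomes precisely $[\ldots] = -T_{\lambda+iy}(H_1) + T_{x-iy}(H_0)[1 - T_{\lambda+iy}(H_1)]$. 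Pulling the scalar $(x-\lambda-2iy)^{-1}$ out of the (conjugate-linear) second slot and sliding $[\ldots]^*$ back into the first slot yields $(x-\lambda-2iy)^{-1}\langle [\ldots]u, v\rangle$, and combining with the prefactor $(x-\lambda)^{-1}$ gives the claimed identity.

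The main thing to watch is the conjugation book-keeping in the second step. Because the two resolvents in the sandwich $FR_{\lambda-iy}(H_1) R_{x+iy}(H_0) F^*$ sit in opposite half-planes, the difference of spectral parameters is $\pm 2iy$ rather than zero, and one has to be careful about where the complex conjugation falls — in particular that a scalar pulled out of the second slot of $\langle\cdot,\cdot\rangle$ is conjugated — in order to land on $(x-\lambda-2iy)^{-1}$ and not on its conjugate.
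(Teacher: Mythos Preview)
Your proposal is correct and follows essentially the same approach as the paper: two applications of \eqref{F: tricky equality}, the first with $w=x+iy$, $z=\lambda+iy$ (the resulting $-\|f_{\lambda+iy}\|^2$ being killed by $\Im$), the second with $w=x-iy$, $z=\lambda+iy$ after passing the resolvent across the inner product, followed by the $F,F^*$ sandwich. The only cosmetic difference is that the paper moves $R_{x+iy}(H_0)$ to the left slot and applies \eqref{F: tricky equality} directly to $R_{x-iy}(H_0)R_{\lambda+iy}(H_1)F^*u$, whereas you move $R_{\lambda+iy}(H_1)$ to the right and work with the adjoint form; the two are equivalent, and your remark about the conjugation book-keeping for the factor $(x-\lambda-2iy)^{-1}$ is exactly the point one has to watch.
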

\begin{proof} 
Using (\ref{F: tricky equality}), we have 
\begin{equation} \label{F: ()Rf=-f+...}
  \begin{split}
      (x - \lambda ) R_{x+iy}(H_0) f_{\lambda+iy} 
         & = (x - \lambda)R_{x+iy}(H_0) R_{\lambda+iy}(H_1)F^*u
      \\ & = \brs{-R_{\lambda+iy}(H_1)  +  R_{x+iy}(H_0)\SqBrs{1 -  F^*F R_{\lambda+iy}(H_1)}} F^*u
%      \\ & = -f_{\lambda+iy} + R_{x+iy}(H_0)F^*u - R_{x+iy}(H_0)F^*F R_{\lambda+iy}(H_1) F^*u.
      \\ & = -f_{\lambda+iy} + R_{x+iy}(H_0)F^*\SqBrs{u - T_{\lambda+iy}(H_1)u}.
  \end{split}
\end{equation}
%In the last pair of square brackets we have collected terms which taken together 
%vanish as $y \to 0^+,$ according to HLSE (\ref{F: HLSE}). 
%Hence, for fixed $y>0$ 
%$$  
%  \lim_{y \to 0^+} \SqBrs{f_{\lambda+iy} + (x - \lambda) R_{x+iy}(H_0) f_{\lambda+iy} }= 0.
%$$
%% Since $u$ satisfies (\ref{F: HLSE}), this gives 
%% \begin{equation*}
%%   \begin{split}
%%       (x+iy-\lambda-iy) & R_{x+iy}(H_0) f_{\lambda+iy} 
%%        \\ & = -f_{\lambda+iy} + R_{x+iy}(H_0)F^* T_{\lambda+i0}(H_1) u - R_{x+iy}(H_0)F^* T_{\lambda+iy}(H_1) u
%%        \\ & = -f_{\lambda+iy} + R_{x+iy}(H_0)F^* \SqBrs{T_{\lambda+i0}(H_1) - T_{\lambda+iy}(H_1)}u.
%%   \end{split}
%% \end{equation*}
Taking the scalar product of both sides of (\ref{F: ()Rf=-f+...}) 
with $\la f_{\lambda+iy}|$ and then taking the imaginary part of the resulting scalar products 
we get
\begin{equation} \label{F: abc}
  (x-\lambda)  \Im \Scal{f_{\lambda+iy}, R_{x+iy}(H_0) f_{\lambda+iy}} 
    = \Im \Scal{R_{x-iy}(H_0)f_{\lambda+iy}, F^* \SqBrs{u - T_{\lambda+iy}(H_1)u}}.
\end{equation}
Using  \eqref{F: tricky equality} again,
we transform the first argument of the last scalar product
as follows:
\begin{equation} \label{F: R(y)f=}
  \begin{split}
      R_{x-iy}(H_0)f_{\lambda+iy} & = R_{x-iy}(H_0)R_{\lambda+iy}(H_1)F^*u
    \\ & = (x-\lambda - 2iy)^{-1}
    \\ & \mbox{ } \hskip 0.7 cm \times \SqBrs{-R_{\lambda+iy}(H_1) + R_{x-iy}(H_0) \big(
              1 - F^*FR_{\lambda+iy}(H_1)\big) } F^*u.
  \end{split}
\end{equation}
Hence, denoting by $[...]$ the expression in the last pair of square brackets, we get
from (\ref{F: abc})
\begin{equation*}
  \begin{split}
    (x-\lambda) \Im & \Scal{f_{\lambda+iy}, R_{x+iy}(H_0) f_{\lambda+iy}} 
%   \\ & = (y-y) \Re \Scal{f_{\lambda+iy}, R_{x+iy}(H_0) f_{\lambda+iy}} 
   \\ &= 
         \Im \brs{(x-\lambda - 2iy)^{-1}\Scal{F[...]F^* u,  \SqBrs{u - T_{\lambda+iy}(H_1)u}}},
  \end{split}
\end{equation*}
as required. 
\end{proof}

\begin{lemma}  Under the premise of Theorem \ref{T1},
for any $\delta > 0$ 
\begin{equation} \label{F: lim int (f, R f)= 0}
    \lim_{ y \to 0^+}   \int_{\mbR \setminus (\lambda-\delta,\lambda+\delta)} \Im \Scal{f_{\lambda+iy}, R_{x+iy}(H_0) f_{\lambda+iy}}\,dx = 0.
\end{equation}
\end{lemma}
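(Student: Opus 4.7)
The plan is to insert the explicit expression from the preceding lemma into the integrand and to extract a prefactor that vanishes as $y \to 0^+$. Since $u \in \Upsilon^1_{\lambda+i0}(H_0)$ and we arranged for $s = 1$ to be non-resonant, the homogeneous Lippmann--Schwinger equation reads $u = T_{\lambda+i0}(H_1) u$, so
$$
\eta(y) := \|u - T_{\lambda+iy}(H_1) u\|_{\clK} \xrightarrow[y\to 0^+]{} 0,
$$
while $\|T_{\lambda+iy}(H_1)\|$ stays uniformly bounded on $(0,1]$ by some constant $C_1$. This $\eta(y)$ enters the integrand through the second argument of the inner product in the preceding lemma and is the engine driving the limit.

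Next I would apply Cauchy--Schwarz and expand the operator $[\ldots] = -T_{\lambda+iy}(H_1) + T_{x-iy}(H_0)\bigl[1 - T_{\lambda+iy}(H_1)\bigr]$. Using $\|T_{x-iy}(H_0)\| = \|T_{x+iy}(H_0)\|$ together with hypothesis \eqref{F: T1} yields, for a.e.\ $x$ and every $y\in(0,1]$,
$$
\bigl|\langle [\ldots] u,\ u - T_{\lambda+iy}(H_1)u\rangle\bigr| \leq C_1\|u\|\,\eta(y) + g(x)\,\eta(y)^2.
$$
Combined with the trivial estimate $|x-\lambda|^{-1}|x-\lambda-2iy|^{-1} \leq (x-\lambda)^{-2}$, the integrand in \eqref{F: lim int (f, R f)= 0} is pointwise dominated by $\bigl(C_1\|u\|\,\eta(y) + g(x)\,\eta(y)^2\bigr)(x-\lambda)^{-2}$.

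Finally I would integrate over $|x-\lambda|>\delta$. On this set $(x-\lambda)^{-2}$ is integrable, contributing $O(\eta(y))$ from the first term; and the elementary inequality $(x-\lambda)^{-2} \leq C_{\delta,\lambda}(1+x^2)^{-1}$ on $|x-\lambda|>\delta$ turns the second-term integral into $C_{\delta,\lambda}\,\eta(y)^2 \int g(x)(1+x^2)^{-1}\,dx$, which is finite by hypothesis. Both contributions vanish as $y\to 0^+$, so \eqref{F: lim int (f, R f)= 0} follows.

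The heart of the argument is not any single estimate but arranging the decomposition so that every surviving term absorbs at least one copy of $\eta(y)$; this is exactly what the preceding lemma was set up to deliver. The only nontrivial input beyond this is hypothesis \eqref{F: T1}, used once to replace $\|T_{x\pm iy}(H_0)\|$ by the $(1+x^2)^{-1}$-integrable function $g(x)$. No further compactness or resolvent identity is required.
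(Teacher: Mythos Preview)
Your argument is correct and follows the same route as the paper: insert the formula from the preceding lemma, split $[\ldots]$ into the $-T_{\lambda+iy}(H_1)$ part and the $T_{x-iy}(H_0)[1-T_{\lambda+iy}(H_1)]$ part, and exploit that $\chi_{\lambda+iy}=[1-T_{\lambda+iy}(H_1)]u\to 0$ together with hypothesis \eqref{F: T1}. The only cosmetic difference is that the paper phrases the second step as an application of the Dominated Convergence Theorem, whereas you write out the explicit majorant $C_1\|u\|\eta(y)(x-\lambda)^{-2}+g(x)\eta(y)^2(x-\lambda)^{-2}$ and integrate directly; the dominating function and the use of $(x-\lambda)^{-2}\le C_{\delta,\lambda}(1+x^2)^{-1}$ are the same in both.
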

\begin{proof}
%Using  \eqref{F: tricky equality} twice, a somewhat lengthy but straightforward calculation shows that  
%%(check this again! -- it's all good -- the additional missing summand actually disappears since it is real)
%\begin{equation} \label{F: Im (fy,R(mu) fy)}
%  \begin{split}
%     &  \Im \Scal{f_{\lambda+iy}, R_{x+iy}(H_0) f_{\lambda+iy}} \\
%      & = (x-\lambda)^{-1}
%           \Im \brs{(x-\lambda-2iy)^{-1}\Scal{[...] u,  \SqBrs{u - T_{\lambda+iy}(H_1)u}}},
%  \end{split}
%\end{equation}
%where 
%$$
%  [...] = -T_{\lambda+iy}(H_1) + T_{x-iy}(H_0) [ 1 -    T_{\lambda+iy}(H_1)].
%$$
We will use \eqref{F: Im (fy,R(mu) fy)} for the integrand.
The contribution of the summand   $-T_{\lambda+iy}(H_1)$ in $[\ldots]$ to the limit \eqref{F: lim int (f, R f)= 0}  is clearly zero.
Thus, introducing the notation 
$$
     \chi_{\lambda + iy} :=  \SqBrs{1 - T_{\lambda+iy}(H_1)} u,
$$
it suffices to prove that the limit of the integral of 
\smallskip
$$
    (x-\lambda)^{-1}
            \Im \brs{(x-\lambda-2iy)^{-1}\Scal{ T_{x-iy}(H_0) \chi_{\lambda + iy}, \chi_{\lambda + iy} } }
$$
\smallskip
\noindent 
over $x \notin (\lambda - \delta, \lambda + \delta)$ 
goes to zero. 

By \eqref{F: HLSE}, the vector $\chi_{\lambda + iy}$ converges to zero as $y \to 0^+.$
Thus, by the assumption \eqref{F: T1}, the integrand converges to zero for a.e. $x \notin (\lambda - \delta, \lambda + \delta).$ 
%By assumption, the operator $T_{\lambda+iy}(H_1)$ converges as $y \to 0^+.$ 
%The growth of $\norm{R_{x-iy}(H_0)}$ to infinity as $y \to 0^+$ is limited by $1/y.$
%Since by Lemma \ref{L2}  the norm of $u - T_{\lambda+iy}(H_1)u$ is bounded by $C\sqrt{y},$ 
%we see that   \eqref{F: Im (fy,R(mu) fy)}   is bounded as a function of $x \in\mbR \setminus (\lambda-\delta,\lambda+\delta)$
%and $y > 0,$ since $1/y$ growth is cancelled by $\sqrt y \cdot \sqrt y$ which come from the two square brackets. 
%The two factors $(x-\lambda)^{-1}$ and $(x-\lambda-2iy)^{-1}$ on the right of \eqref{F: Im (fy,R(mu) fy)} ensure that 
%$$
%     \norm{  \Im \Scal{f_{\lambda+iy}, R_{x+iy}(H_0) f_{\lambda+iy}} } \leq C_2 \abs{\lambda-x}^{-2},
%$$
%where $C_2$ does not depend on $x$ and $y.$
%the integrand behaves well at $\pm \infty.$
Moreover, by the same assumption we can apply the Lebesgue Dominated Convergence Theorem to interchange the limit $y \to 0^+$ with the integration. 
%Since the term in the square brackets 
%$\SqBrs{u - T_{\lambda+iy}(H_1)u}$ converges to zero as $y \to 0^+,$ 
%the integrand converges to zero for a.e. $x,$ and so 
%we are done. 
\end{proof} 

%Now we integrate the left hand side w.r.t. to~$\mu$ over the complement of $(\lambda-\delta,\lambda+\delta),$ where $\delta>0.$
%Combining this with the fact that $u$ is a solution to (\ref{F: HLSE}), so that the term 
%$u - T_{\lambda+iy}(H_1)u$
%in the square brackets goes to zero as $y \to 0^+,$ \ldots 
%one shows that the integral 
%$$
%    \int_{\mbR \setminus (\lambda-\delta,\lambda+\delta)} \Im \Scal{f_{\lambda+iy}, R_{\mu+iy}(H_0) f_{\lambda+iy}}\,d\mu
%$$
%goes to zero as $y \to 0^+.$
Now we can complete proof of Theorem \ref{T1}.
By Stone's formula, 
the integral of $\pi^{-1}\Im R_{x + iy}(H_0)$ over the complement of $(\lambda-\delta,\lambda+\delta)$ converges strongly to 
$$
     E_{\mbR \setminus (\lambda-\delta,\lambda+\delta)}(H_0) + \frac 12 {E_{\set{\lambda-\eps,\lambda+\eps}}},
$$ 
as $y \to 0^+.$
Combining this with \eqref{F: lim int (f, R f)= 0} gives
\begin{equation} \label{F: E y to 0}
   E_{\mbR \setminus (\lambda-\delta,\lambda+\delta)}(H_0)f_{\lambda+iy} \to 0
\end{equation}
as $y\to 0^+.$ 
%This proves the inclusion  \eqref{F: Upsilon subset}.
%$$ 
%    \Upsilon^1_{\lambda+i0}(H_0) \subset \bigcap_{\lambda \in O} \overline{F E_O(H_0)\hilb}.
%$$

In order to prove the inclusion \eqref{F: Upsilon subset},
it suffices to show that for any $\eps>0$ and $\delta>0$ there 
exists $\psi \in E_{(\lambda-\delta,\lambda+\delta)}(H_0)$ such that 
the distance between $u$ and $F\psi$ is less than $\eps.$ 
We claim that for small enough $y>0$ the choise
$$
    \psi  = E_{(\lambda - \delta, \lambda + \delta)} f_{\lambda+iy}
$$
works. Indeed,
\begin{equation*}
  \begin{split}
      \| u - F \psi \| & =   \| u -  F E_{(\lambda - \delta, \lambda + \delta)} f_{\lambda+iy} \|  \\
         & \leq \| u - F f_{\lambda+iy} \| + \|  F f_{\lambda+iy}  -  F E_{(\lambda - \delta, \lambda + \delta)} f_{\lambda+iy} \|   \\
         & \leq \| u - T_{\lambda+iy}(H_1) u \| + \|  F\| \| f_{\lambda+iy}  -   E_{(\lambda - \delta, \lambda + \delta)} f_{\lambda+iy} \|.
  \end{split}
\end{equation*}
Since $u$ is a solution to \eqref{F: HLSE} (with $s=1$), for all small enough $y>0$ the first summand is $< \eps/2.$
By \eqref{F: E y to 0}, for all small enough $y>0$ the second summand is also $< \eps/2.$
%Moreover, according to (\ref{F: HLSE}), 
%it is sufficient to show that for any $\eps>0$ and $y_0>0$ 
%there exists $y \in (0,y_0)$ and $\psi \in E_{(\lambda-\eps,\lambda+\eps)}(H_0)$ such that 
%the distance between $ T_{\lambda+iy}(H_1)u$ and $F\psi$ is less than $\eps,$ since the former vector is close to~$u.$ 
%%This is because $s T_{\lambda+iy}(H_s)u$ is close to $u.$ 
%One can choose 
%$$
%    \psi  = E_{(\lambda - \delta, \lambda + \delta)} f_{\lambda+iy}
%$$

Proof is complete.

\bigskip\bigskip 
{\it Acknowledgements.} I thank my wife for financial support during the work on this paper.

\end{document}